\documentclass[11pt]{amsart}
\usepackage[centertags]{amsmath}
\usepackage{amsfonts}
\usepackage{amssymb}
\usepackage{amsthm}
\usepackage{newlfont}
\usepackage{amscd}
\usepackage{mathrsfs}
\usepackage[all,cmtip]{xy}
\usepackage{tikz-cd}
\tikzcdset{arrow style=Latin Modern, row sep/normal=1.35em, column sep/normal=1.8em, cramped}
\usepackage{tikz}
\usepackage[pagebackref=false,colorlinks]{hyperref}

\definecolor{mycolor}{HTML}{F7F8E0}
\definecolor{myorange}{RGB}{245,156,74}
\definecolor{cadetgrey}{rgb}{0.57, 0.64, 0.69}
\definecolor{calpolypomonagreen}{rgb}{0.12, 0.3, 0.17}

\hypersetup{pdffitwindow=true,linkcolor=calpolypomonagreen,citecolor=calpolypomonagreen,urlcolor=calpolypomonagreen}
\usepackage{cite}
\usepackage{fancyhdr}
\usepackage{stmaryrd}

\usepackage[OT2,OT1]{fontenc}
\newcommand\cyr{%
\renewcommand\rmdefault{wncyr}%
\renewcommand\sfdefault{wncyss}%
\renewcommand\encodingdefault{OT2}%
\normalfont
\selectfont}
\DeclareTextFontCommand{\textcyr}{\cyr}

\usepackage[toc, page] {appendix}

\numberwithin{equation}{section}

\addtolength{\textwidth}{3cm}
\addtolength{\textheight}{3cm}
\addtolength{\hoffset}{-1.5cm}
\addtolength{\voffset}{-1.5cm}


\newtheorem{thm}{Theorem}[section]

\newtheorem{cor}[thm]{Corollary}
\newtheorem{lem}[thm]{Lemma}
\newtheorem{prop}[thm]{Proposition}

\newtheorem{conj}[thm]{Conjecture}

\theoremstyle{definition}

\newtheorem{rem}[thm]{Remark}

\newcommand{\sha}{\textrm{{\cyr SH}}}

\pagestyle{plain}                                                                                                                                                                                                                                                                                                                                    

\begin{document}
\title[Soft $p$-converse to GZK]{On the soft $p$-converse to a theorem of Gross--Zagier and Kolyvagin}
\author{Chan-Ho Kim}
\address{Center for Mathematical Challenges, Korea Institute for Advanced Study, 85 Hoegiro, Dongdaemun-gu, Seoul 02455, Republic of Korea}
\email{chanho.math@gmail.com}
\date{\today}
\subjclass[2010]{11F67, 11G40, 11R23}
\keywords{Birch and Swinnerton-Dyer conjecture, elliptic curves, Iwasawa theory, Iwasawa main conjecture, Kato's Euler systems}
\begin{abstract}
We give a proof of a soft version of the $p$-converse to a theorem of Gross--Zagier and Kolyvagin for non-CM elliptic curves with good ordinary reduction at $p >3$ under the irreducibility assumption on the residual representation.
In particular, no condition on the conductor is imposed. Combining with the known results, we obtain the equivalence
$$ \mathrm{rk}_{\mathbb{Z}}E(\mathbb{Q}) = 1, \#\sha(E/\mathbb{Q}) < \infty \Leftrightarrow \mathrm{ord}_{s=1}L(E, s) =1$$
for \emph{every} elliptic curve $E$ over $\mathbb{Q}$.
\end{abstract}
\thanks{Data Availability Statements: Data sharing not applicable to this article as no datasets were generated or analysed during the current study.}
\thanks{Statements and Declarations: The author has no relevant financial or non-financial interests to disclose.}

\maketitle

\setcounter{tocdepth}{1}

\section{Statement of the main result}
Let $E$ be an elliptic curve over $\mathbb{Q}$ and $p$ be a prime. The $p^\infty$-Selmer group of $E$ is defined by
$$\mathrm{Sel}(\mathbb{Q}, E[p^\infty])  =\mathrm{ker} \left( \mathrm{H}^1(\mathbb{Q}, E[p^\infty])  \to \prod_v \frac{\mathrm{H}^1(\mathbb{Q}_v, E[p^\infty])}{E(\mathbb{Q}_v) \otimes \mathbb{Q}_p/\mathbb{Z}_p} \right)$$
where $v$ runs over the places of $\mathbb{Q}$, and it encodes the arithmetic of $E$ via the fundamental exact sequence
\[
\xymatrix{
0 \ar[r] & E(\mathbb{Q}) \otimes \mathbb{Q}_p/\mathbb{Z}_p \ar[r] & \mathrm{Sel}(\mathbb{Q}, E[p^\infty]) \ar[r] & \sha(E/\mathbb{Q})[p^\infty] \ar[r] & 0.
}
\]
Denote by $V$ the two dimensional Galois representation over $\mathbb{Q}_p$ associated to $E$.
The Selmer group of $V$ is defined by
$\mathrm{Sel}(\mathbb{Q}, V)  =\mathrm{ker} \left( \mathrm{H}^1(\mathbb{Q}, V)  \to \prod_v \frac{\mathrm{H}^1(\mathbb{Q}_v, V)}{E(\mathbb{Q}_v) \otimes \mathbb{Q}_p} \right)$ in the same manner.

The aim of this article is to give a succinct proof of the following \emph{$p$-converse} result to the theorem of Gross--Zagier and Kolyvagin on elliptic curves of analytic rank one.
\begin{thm} \label{thm:main}
If
\begin{enumerate}
\item[(cork1)] $\mathrm{cork}_{\mathbb{Z}_p} \mathrm{Sel}(\mathbb{Q}, E[p^\infty]) = 1$,
\item[(PRC)] Perrin-Riou's conjecture on Kato's zeta elements holds (Conjecture \ref{conj:perrin-riou}),
\item[(IMC)] the Iwasawa main conjecture (inverting $p$) holds (Conjecture \ref{conj:imc-inverting-p}), and
\item[(res)] the restriction map $\mathrm{res}_p : \mathrm{Sel}(\mathbb{Q}, V) \to E(\mathbb{Q}_p) \otimes \mathbb{Q}_p$ is an isomorphism,
\end{enumerate}
then $\mathrm{ord}_{s=1}L(E, s) =1$. In particular, $\mathrm{rk}_{\mathbb{Z}}E(\mathbb{Q}) = 1$ and $\#\sha(E/\mathbb{Q}) < \infty$.
\end{thm}
Perrin-Riou's conjecture is recently proved by Bertolini--Darmon--Venerucci \cite{bertolini-darmon-venerucci} and Burungale--Skinner--Tian \cite{burungale-skinner-tian} independently (Theorem \ref{thm:bertolini-darmon-venerucci}).
Also, the Iwasawa main conjecture (inverting $p$) is proved for non-CM elliptic curves with good ordinary reduction at $p >3$ by Kato \cite{kato-euler-systems}, Skinner--Urban \cite{skinner-urban}, and Wan \cite{wan_hilbert} if $E[p]$ is an irreducible Galois representation (Theorem \ref{thm:imc-inverting-p}).
As an application of these results, we obtain the following statement.
\begin{cor}\label{cor:good_ordinary}
Let $E$ be a non-CM elliptic curve over $\mathbb{Q}$ and $p>3$ a good ordinary prime for $E$ such that $E[p]$ is an irreducible mod $p$ Galois representation.
If
\begin{enumerate}
\item[(cork1)] $\mathrm{cork}_{\mathbb{Z}_p} \mathrm{Sel}(\mathbb{Q}, E[p^\infty]) = 1$, and
\item[(res)] the restriction map $\mathrm{res}_p : \mathrm{Sel}(\mathbb{Q}, V) \to E(\mathbb{Q}_p) \otimes \mathbb{Q}_p$ is an isomorphism,
\end{enumerate}
then $\mathrm{ord}_{s=1}L(E, s) =1$.
In particular, $\mathrm{rk}_{\mathbb{Z}}E(\mathbb{Q}) = 1$ and $\#\sha(E/\mathbb{Q}) < \infty$.
\end{cor}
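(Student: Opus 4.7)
The plan is to deduce the corollary as a direct consequence of Theorem \ref{thm:main} by verifying that the hypothesis (IMC[$1/p$]) is automatic under the stated assumptions; the remaining hypotheses of the theorem are either identical to those of the corollary or follow trivially.

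First I would check the ``easy'' reductions: since $p > 3$ is a good ordinary prime, in particular $p > 2$ and $E$ has non-additive (indeed good) reduction at $p$, so the running hypothesis of Theorem \ref{thm:main} on $p$ is met. The irreducibility of $E[p]$ and the Selmer conditions (cork1) and (res) are inherited verbatim from the hypotheses of the corollary. Thus the whole content of the deduction is to verify (IMC[$1/p$]) for the cyclotomic $\mathbb{Z}_p$-extension of $\mathbb{Q}$.

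Second, I would assemble (IMC[$1/p$]) from the three cited sources. Kato's Euler system \cite{kato-euler-systems} supplies the divisibility in which the characteristic ideal of the Pontryagin dual of $\mathrm{Sel}(\mathbb{Q}_\infty, E[p^\infty])$ divides Mazur's $p$-adic $L$-function (at least after inverting $p$); this direction uses only the irreducibility of $E[p]$ and the ordinary hypothesis at $p$. The opposite divisibility is provided by Skinner--Urban \cite{skinner-urban} for non-CM $E$ with good ordinary $p > 2$, irreducible residual representation, and a mild ramification hypothesis ensuring the existence of a suitable auxiliary prime; Wan's work \cite{wan_hilbert} on the Hilbert modular case eliminates the residual ramification restriction and handles the remaining cases needed to cover every $E$ in the stated generality (once $p > 3$ and $E[p]$ is irreducible). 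Combining the two matching divisibilities yields (IMC[$1/p$]).

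The main (and only) ``obstacle'' is therefore external, namely invoking the correct combination of Kato, Skinner--Urban, and Wan in the precise generality needed; once that is on the table the corollary is a one-line application of Theorem \ref{thm:main}. In particular no auxiliary hypothesis on the conductor of $E$ has to be imposed, which is exactly the point advertised in the abstract.
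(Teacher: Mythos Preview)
Your proposal is correct and mirrors the paper's own argument exactly: the corollary is obtained as an immediate application of Theorem \ref{thm:main} once one observes that (IMC[$1/p$]) is known for $p\geq 5$ good ordinary with $E[p]$ irreducible by the combined work of Kato, Skinner--Urban, and Wan (see the remark following Assumption \ref{assu:imc-inverting-p}). The only cosmetic difference is that you phrase the main conjecture via Mazur's $p$-adic $L$-function and $\mathrm{Sel}(\mathbb{Q}_\infty,E[p^\infty])$ rather than via Kato's zeta element and $\mathrm{Sel}_0$, but in the good ordinary case these formulations are well known to be equivalent.
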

\begin{rem}
\begin{enumerate}
\item Assumption (res) also appears in \cite[Theorem B]{skinner-converse} in a slightly different context. See also \cite[Remark 2.9.1.(x)]{skinner-converse}. 
The $p$-converse \emph{with} Assumption (res) is called the \emph{soft} $p$-converse. 
\item When $p=3$, Corollary \ref{cor:good_ordinary} still holds if we further assume that there exists a prime $\ell$ exactly dividing the conductor of $E$ such that $E[p]$ is ramified at $\ell$ \cite[Theorem 3.33]{skinner-urban}.
\item For a non-CM elliptic curve $E$, the density of good ordinary primes for $E$ is one \cite[Corollaire 2 to Th\'{e}or\`{e}me 20, $\S$8.2]{serre-density} and $E[p]$ is an irreducible Galois representation for a sufficiently large prime $p \gg 0$ \cite[Theorem in $\S$2.1 of Chapter IV]{serre-abelian-l-adic}.
\end{enumerate}
\end{rem}
Since Assumption (res) holds if we further assume $\#\sha(E/\mathbb{Q})[p^\infty] < \infty$ in Corollary \ref{cor:good_ordinary}, we obtain the following statement.
\begin{cor} \label{cor:converse}
Let $E$ be a non-CM elliptic curve over $\mathbb{Q}$.
If $\mathrm{rk}_{\mathbb{Z}}E(\mathbb{Q}) = 1$ and $\#\sha(E/\mathbb{Q})[p^\infty] < \infty$ for at least one good ordinary prime $p > 3$ such that $E[p]$ is irreducible, then $\mathrm{ord}_{s=1}L(E, s) =1$. In particular, $\#\sha(E/\mathbb{Q}) < \infty$.
\end{cor}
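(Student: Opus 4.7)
The plan is to deduce Corollary \ref{cor:converse} directly from Corollary \ref{cor:good_ordinary} by checking that its two hypotheses (cork1) and (res) are automatic consequences of $\mathrm{rk}_{\mathbb{Z}}E(\mathbb{Q}) = 1$ together with $\#\sha(E/\mathbb{Q})[p^\infty] < \infty$. The assumptions that $p > 3$ is good ordinary, $E$ is non-CM, and $E[p]$ is irreducible are shared between the two statements, so they transfer over with no work.

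First I would use the classical descent exact sequence
\[
0 \to E(\mathbb{Q}) \otimes \mathbb{Q}_p/\mathbb{Z}_p \to \mathrm{Sel}(\mathbb{Q}, E[p^\infty]) \to \sha(E/\mathbb{Q})[p^\infty] \to 0,
\]
which together with the two hypotheses gives $\mathrm{cork}_{\mathbb{Z}_p} \mathrm{Sel}(\mathbb{Q}, E[p^\infty]) = \mathrm{rk}_{\mathbb{Z}} E(\mathbb{Q}) = 1$, verifying (cork1). For (res), the finiteness of $\sha(E/\mathbb{Q})[p^\infty]$ combined with the Kummer sequence identifies $\mathrm{Sel}(\mathbb{Q}, V) \cong E(\mathbb{Q}) \otimes_{\mathbb{Z}} \mathbb{Q}_p$, which is one-dimensional over $\mathbb{Q}_p$. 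On the local side, good reduction at $p$ gives a formal group short exact sequence exhibiting $E(\mathbb{Q}_p)$ as an extension of a finite group by $\mathbb{Z}_p$, so $E(\mathbb{Q}_p) \otimes_{\mathbb{Z}} \mathbb{Q}_p$ is also one-dimensional. Hence it suffices to prove that $\mathrm{res}_p$ is nonzero. Pick any non-torsion $P \in E(\mathbb{Q})$; since $\mathbb{Q} \hookrightarrow \mathbb{Q}_p$ induces an injection $E(\mathbb{Q}) \hookrightarrow E(\mathbb{Q}_p)$, the point $P$ remains of infinite order in $E(\mathbb{Q}_p)$, so its image in $E(\mathbb{Q}_p)/\mathrm{tors} \cong \mathbb{Z}_p$ is nonzero, and therefore its class in $E(\mathbb{Q}_p) \otimes \mathbb{Q}_p$ is nonzero. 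This shows $\mathrm{res}_p$ is a nonzero map between one-dimensional $\mathbb{Q}_p$-vector spaces, hence an isomorphism.

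With (cork1) and (res) in hand, Corollary \ref{cor:good_ordinary} delivers $\mathrm{ord}_{s=1}L(E, s) = 1$, and then the Gross--Zagier and Kolyvagin theorem upgrades $\#\sha(E/\mathbb{Q})[p^\infty] < \infty$ to the full finiteness $\#\sha(E/\mathbb{Q}) < \infty$. There is essentially no obstacle: the whole argument is bookkeeping, and the only mildly delicate point is verifying injectivity of $\mathrm{res}_p$, which reduces to the observation that a non-torsion rational point cannot become torsion over $\mathbb{Q}_p$.
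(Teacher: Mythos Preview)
Your proposal is correct and follows exactly the route the paper indicates: the paper simply states that Assumption (res) holds once $\#\sha(E/\mathbb{Q})[p^\infty] < \infty$ is added to the hypotheses of Corollary~\ref{cor:good_ordinary}, and you have filled in the standard details behind that assertion (the descent sequence for (cork1), and the identification $\mathrm{Sel}(\mathbb{Q},V)\cong E(\mathbb{Q})\otimes\mathbb{Q}_p$ together with the injectivity of $E(\mathbb{Q})\hookrightarrow E(\mathbb{Q}_p)$ for (res)).
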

\begin{rem}
We are informed that Perrin-Riou's conjecture and Corollaries \ref{cor:good_ordinary} and \ref{cor:converse} also appear in \cite{burungale-skinner-tian}.
\end{rem}
In their pioneering works, Skinner \cite{skinner-converse} and W. Zhang \cite{wei-zhang-mazur-tate} independently proved the first general results towards the $p$-converse under certain assumptions on the conductor of elliptic curves.
 In \cite[Theorem A]{skinner-converse}, the conductor is square-free and satisfies a certain existence condition on split/non-split reduction primes.
 In \cite[Theorem 1.3]{wei-zhang-mazur-tate}, the conductor satisfies a certain ramification condition arising from the arithmetic of Shimura curves.
Our result completely removes these assumptions on the conductor.

Together with the work of Rubin \cite{rubin-p-converse} and Burungale--Tian \cite{burungale-tian-p-converse} on the CM case,
Corollary \ref{cor:converse} completes the converse to the theorem of Gross--Zagier and Kolyvagin for elliptic curves over $\mathbb{Q}$. In other words, we have the equivalence
$$\mathrm{rk}_{\mathbb{Z}}E(\mathbb{Q}) = 1, \#\sha(E/\mathbb{Q}) < \infty \Leftrightarrow \mathrm{ord}_{s=1}L(E, s) =1$$
for \emph{every} elliptic curve $E$ over $\mathbb{Q}$.

On the other hand, the $p$-converse \emph{without} Assumption (res), which is called the \emph{strong} $p$-converse, can be obtained from the Heegner point main conjecture and other standard ingredients as explained in \cite{wan-heegner, burungale-castella-kim, castella-wan-perrin-riou-ss}. The Heegner point main conjecture can be replaced by the Kolyvagin conjecture  \cite{wei-zhang-mazur-tate}.
We avoid using the Heegner point main conjecture here since it is currently not proved in general despite significant progresses towards it.
Even the formulation of the conjecture at additive reduction primes is an open problem.

The $p$-converse itself is studied extensively in various forms \cite{venerucci-converse, burungale-tian-p-converse, wan-heegner, castella-grossi-lee-skinner, burungale-castella-skinner-tian, skinner-zhang, castella-wan-perrin-riou-ss, sweeting-kolyvagin, burungale-skinner-tian}.
See also \cite{burungale-skinner-tian-survey} for the survey of the recent developments.

\section{Preliminaries}
\subsection{Kato's zeta elements}
Let $T$ be the Tate module of an elliptic curve $E$ and $V = T \otimes \mathbb{Q}_p$.
Denote by $\omega_E$ the N\'{e}ron differential of a global minimal Weierstrass equation for $E$, and
$\Omega^\pm_E$ the real and imaginary N\'{e}ron periods for $E$, respectively.
Adapting the convention of \cite[$\S$6.3 and Theorem 16.2]{kato-euler-systems}, we have the period map $\mathrm{per}_E$ from the space of global differential one-forms on $E$ to the first Betti cohomology of $E$ over $\mathbb{C}$
such that
$$\mathrm{per}_E(\omega_E) = \Omega^+_E \cdot \gamma^+ + \Omega^-_E \cdot \gamma^-$$
for some non-zero $\gamma^{\pm} \in V_{\mathbb{Q}}(-1)^{c = \pm 1}$, respectively, where
$V_{\mathbb{Q}}(-1)$ is the first Betti cohomology of $E$ over $\mathbb{Q}$ and $c$ is the complex conjugation.
We identify $V_{\mathbb{Q}}(-1) \otimes \mathbb{Q}_p$ with the first $p$-adic \'{e}tale cohomology $V(-1)$ of $E$ over $\mathbb{Q}_p$, so $\gamma^{\pm} \in V(-1)$.

Let $\gamma = \gamma^+ + \gamma^- \in V(-1)$.
Following \cite[Theorem 12.5.(1)]{kato-euler-systems}, Kato's zeta element 
$$\mathbf{z}^{(p)}_\gamma \in \varprojlim_n \mathrm{H}^1(\mathbb{Q}(\zeta_{p^n}), V(-1)) $$
for $V(-1)$ over $\mathbb{Q}(\zeta_{p^\infty})$ is obtained from $\gamma$ where $\varprojlim_n$ is taken with respect to the corestriction.
Kato's zeta element $z_{\mathrm{Kato}} \in \mathrm{H}^1(\mathbb{Q}, V)$ for $V$ over $\mathbb{Q}$ is defined by the image of $\mathbf{z}^{(p)}_\gamma$ under the composition
\[
\xymatrix{
\varprojlim_n \mathrm{H}^1(\mathbb{Q}(\zeta_{p^n}), V(-1)) \ar[rr]^-{\otimes (\zeta_{p^n})_{n\geq1}} & &
\varprojlim_n \mathrm{H}^1(\mathbb{Q}(\zeta_{p^n}), V)
\ar[r] &  \mathrm{H}^1(\mathbb{Q}, V) .
}
\]
Let $\mathbb{Q}_\infty$ be the cyclotomic $\mathbb{Z}_p$-extension of $\mathbb{Q}$ and $\Lambda = \mathbb{Z}_p\llbracket \mathrm{Gal}(\mathbb{Q}_{\infty}/\mathbb{Q}) \rrbracket$ the Iwasawa algebra.
Write $\mathrm{H}^1_{\mathrm{Iw}}(\mathbb{Q},V) = \varprojlim_n \mathrm{H}^1(\mathbb{Q}_n,V)$
where $\mathbb{Q}_n$ is the cyclic subextension of $\mathbb{Q}$ of order $p^n$ in $\mathbb{Q}_\infty$.
Kato's zeta element  $z^{\infty}_{\mathrm{Kato}} \in \mathrm{H}^1_{\mathrm{Iw}}(\mathbb{Q},V)$ for $V$ over $\mathbb{Q}_\infty$ is similarly defined by the image of 
$\mathbf{z}^{(p)}_\gamma$ in $\mathrm{H}^1_{\mathrm{Iw}}(\mathbb{Q},V)$.
\begin{rem}
If $p$ is odd and the image of the Galois representation $\rho : \mathrm{Gal}(\overline{\mathbb{Q}}/\mathbb{Q}) \to \mathrm{GL}(V)$
contains a conjugate of $\mathrm{SL}_2(\mathbb{Z}_p)$, then $\gamma$ can be chosen in $T(-1)$ and all the zeta elements lie in the Galois cohomologies with integral coefficients. In other words, $V$ can be replaced by $T$ in the above discussion. See \cite[Theorem 5.1]{kurihara-invent} and  \cite[Theorem 12.5.(4)]{kato-euler-systems}.
\end{rem}
Kato's explicit reciprocity law \cite[Theorem 12.5.(1)]{kato-euler-systems} says that
\begin{equation} \label{eqn:explicit-reciprocity-law}
\mathrm{exp}^* \circ \mathrm{res}_p ( z_{\mathrm{Kato}} ) = (1 - a_p(E) \cdot p^{-1} + p^{-1} ) \cdot \dfrac{L(E, 1)}{\Omega^+_E} \cdot \omega_E
\end{equation}
where
 $\mathrm{res}_p : \mathrm{H}^1(\mathbb{Q}, V) \to \mathrm{H}^1(\mathbb{Q}_p, V)$ is the restriction map and
 $\mathrm{exp}^*$ is Bloch--Kato's dual exponential map.
\subsection{Perrin-Riou's conjecture}
We recall Perrin-Riou's conjecture on Kato's zeta elements \cite[Conjecture in $\S$3.3.2]{perrin-riou-rational-pts}.
\begin{conj}[Perrin-Riou] \label{conj:perrin-riou}
Let $E$ be an elliptic curve over $\mathbb{Q}$ and $p$ be a prime.
If $L(E,1) = 0$, then there exists a global point $P \in E(\mathbb{Q})$ satisfying the following properties.
\begin{enumerate}
\item The point $P$ has infinite order if and only if $\mathrm{ord}_{s=1}L(E, s) =1$.
\item The following equality holds in $\mathbb{Q}_p$ up to multiplication by a non-zero rational number:
$$\mathrm{log}_{\omega_E}(\mathrm{res}_p (z_{\mathrm{Kato}})) = \left( \mathrm{log}_{\omega_E}(P) \right)^2$$
where $\mathrm{log}_{\omega_E}$ is the $p$-adic Lie group logarithm corresponding to $\omega_E$.
\end{enumerate}
\end{conj}
Conjecture \ref{conj:perrin-riou} is recently proved by Bertolini--Darmon--Venerucci \cite[Theorem A]{bertolini-darmon-venerucci} and Burungale--Skinner--Tian \cite{burungale-skinner-tian} independently.
\begin{thm}[Bertolini--Darmon--Venerucci] \label{thm:bertolini-darmon-venerucci}
If $E$ has semi-stable reduction at an odd prime $p$, then Conjecture \ref{conj:perrin-riou} is true.
\end{thm}
\begin{rem}
In \cite{burungale-skinner-tian}, Theorem \ref{thm:bertolini-darmon-venerucci} is proved for a good reduction prime $p \geq 5$.
\end{rem}
We also refer the reader to \cite{venerucci-perrin-riou, kazim-iwasawa-2017, kazim-pollack-sasaki} for other approaches towards Perrin-Riou's conjecture.

\subsection{Iwasawa main conjecture}
We review the Iwasawa main conjecture (inverting $p$) without $p$-adic $L$-functions \`{a} la Kato \cite[$\S$6]{kurihara-invent}, \cite[Conjecture 12.10]{kato-euler-systems}.

The $p$-strict Selmer group of $E[p^\infty]$ over $\mathbb{Q}_n$ is defined by
$$\mathrm{Sel}_0(\mathbb{Q}_n, E[p^\infty])  = \mathrm{ker} \left( \mathrm{res}_p : \mathrm{Sel}(\mathbb{Q}_n, E[p^\infty]) \to E(\mathbb{Q}_{n,p}) \otimes \mathbb{Q}_p/\mathbb{Z}_p \right)$$
where $\mathbb{Q}_{n,p}$ is the $p$-adic completion of $\mathbb{Q}_{n}$.
Write $\mathrm{Sel}_0(\mathbb{Q}_{\infty}, E[p^\infty]) = \varinjlim_n \mathrm{Sel}_0(\mathbb{Q}_n, E[p^\infty])$ and $(-)^\vee = \mathrm{Hom}_{\mathbb{Z}_p}(-, \mathbb{Q}_p/\mathbb{Z}_p)$.
The rational version of the Iwasawa main conjecture can be written as follows.
\begin{conj}[IMC] \label{conj:imc-inverting-p}
As ideals of $\Lambda \otimes \mathbb{Q}_p$, we have
$$\mathrm{char}_{\Lambda \otimes \mathbb{Q}_p}
\left( \dfrac{\mathrm{H}^1_{\mathrm{Iw}}(\mathbb{Q},V)}{ z^{\infty}_{\mathrm{Kato}}} \right) 
=
\mathrm{char}_{\Lambda \otimes \mathbb{Q}_p} \left( \mathrm{Sel}_0(\mathbb{Q}_{\infty}, E[p^\infty])^\vee \otimes_{\mathbb{Z}_p} \mathbb{Q}_p \right) .
$$
\end{conj}
\begin{thm}[Kato, Skinner--Urban, Wan] \label{thm:imc-inverting-p}
If $E$ has good ordinary reduction at $p \geq 5$ and $E[p]$ is an irreducible Galois representation, then Conjecture \ref{conj:imc-inverting-p} is true.
\end{thm}
\begin{proof}
See \cite[Theorem 12.5 and Theorem 17.4]{kato-euler-systems}, \cite[Theorem 3.33]{skinner-urban}, and \cite[Theorem 4]{wan_hilbert}.
\end{proof}

\section{Proof of Theorem \ref{thm:main}}
We extract the following statement from Conjecture \ref{conj:perrin-riou}.
\begin{cor} \label{cor:bertolini-darmon-venerucci}
If Conjecture \ref{conj:perrin-riou} holds, then the following statements are equivalent.
\begin{enumerate}
\item $\mathrm{ord}_{s=1} L(E, s) = 1$.
\item The global point $P$ in Conjecture \ref{conj:perrin-riou} has infinite order.
\item $\mathrm{res}_p (z_{\mathrm{Kato}})$ is non-zero.
\end{enumerate}
\end{cor}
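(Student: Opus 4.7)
The plan is to read off all three equivalences directly from Theorem \ref{thm:bertolini-darmon-venerucci}, whose setting carries the hypothesis $L(E,1)=0$; this makes the global point $P$ available and means that (1) is really the statement that the order of vanishing of $L(E,s)$ at $s=1$ is exactly one rather than $\geq 2$.

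First I would note that $(1)\Leftrightarrow(2)$ is literally part (1) of Theorem \ref{thm:bertolini-darmon-venerucci}, so there is nothing to do for that half.

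The substance lies in $(2)\Leftrightarrow(3)$, which I would obtain from the identity
$$\mathrm{log}_{\omega_E}(\mathrm{res}_p(z_{\mathrm{Kato}})) = (\mathrm{log}_{\omega_E}(P))^2$$
of part (2) (valid up to a non-zero rational factor) together with two injectivity inputs. On the right, $\mathrm{log}_{\omega_E}$ is injective on $E(\mathbb{Q}_p)\otimes\mathbb{Q}_p$ (its kernel on $E(\mathbb{Q}_p)$ being torsion), and since $E(\mathbb{Q})_{\mathrm{tors}}=E(\mathbb{Q})\cap E(\mathbb{Q}_p)_{\mathrm{tors}}$, this gives $\mathrm{log}_{\omega_E}(P)\neq 0$ iff $P$ has infinite order in $E(\mathbb{Q})$. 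On the left, Kato's explicit reciprocity law recalled just before Theorem \ref{thm:bertolini-darmon-venerucci}, combined with $L(E,1)=0$, forces $\mathrm{exp}^*\circ\mathrm{res}_p(z_{\mathrm{Kato}})=0$, so $\mathrm{res}_p(z_{\mathrm{Kato}})$ lies in the Bloch--Kato finite subspace $\mathrm{H}^1_f(\mathbb{Q}_p,V)$; via the Bloch--Kato identification $\mathrm{H}^1_f(\mathbb{Q}_p,V)\cong E(\mathbb{Q}_p)\otimes\mathbb{Q}_p$ the formal logarithm $\mathrm{log}_{\omega_E}$ is injective there as well, so $\mathrm{res}_p(z_{\mathrm{Kato}})\neq 0$ iff $\mathrm{log}_{\omega_E}(\mathrm{res}_p(z_{\mathrm{Kato}}))\neq 0$. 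Since non-vanishing in the field $\mathbb{Q}_p$ is preserved by squaring, combining these two injectivity statements with the displayed identity gives $(2)\Leftrightarrow(3)$.

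I do not expect any real obstacle. The only point that needs careful parsing is the notation $\mathrm{log}_{\omega_E}(\mathrm{res}_p(z_{\mathrm{Kato}}))$: it is implicitly the composition of the Bloch--Kato isomorphism $\mathrm{H}^1_f(\mathbb{Q}_p,V)\cong E(\mathbb{Q}_p)\otimes\mathbb{Q}_p$ with the formal-group logarithm, and this composition is well defined and injective precisely because the hypothesis $L(E,1)=0$ places $\mathrm{res}_p(z_{\mathrm{Kato}})$ in the crystalline subspace. Once this is unpacked, the corollary follows in a few lines.
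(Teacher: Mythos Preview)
Your proposal is correct and follows exactly the route the paper intends: the paper gives no explicit proof but simply presents the corollary as an immediate extraction from Theorem~\ref{thm:bertolini-darmon-venerucci}, and your argument is precisely the natural unpacking of that extraction (reading $(1)\Leftrightarrow(2)$ off part (1), and deducing $(2)\Leftrightarrow(3)$ from the logarithm identity in part (2) together with injectivity of $\log_{\omega_E}$ on $\mathrm{H}^1_f(\mathbb{Q}_p,V)\cong E(\mathbb{Q}_p)\otimes\mathbb{Q}_p$). Your care in noting that $L(E,1)=0$ forces $\mathrm{res}_p(z_{\mathrm{Kato}})\in\mathrm{H}^1_f(\mathbb{Q}_p,V)$ via Kato's reciprocity is the one point the paper leaves entirely implicit.
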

Although the $L(E,1) = 0$ assumption is incorporated in Conjecture \ref{conj:perrin-riou}, it can be removed in Theorem \ref{thm:main} thanks to the following theorem.
\begin{thm}[Gross--Zagier, Kolyvagin, Rubin, Kato] \label{thm:kato}
Let $E$ be an elliptic curve over $\mathbb{Q}$.
If $L(E,1) \neq 0$, then $\mathrm{Sel}(\mathbb{Q}, E[p^\infty])$ is finite.
\end{thm}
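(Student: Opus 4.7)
The plan is to apply the Euler system method of Kolyvagin--Rubin, fed by Kato's own Euler system of Beilinson--Kato classes $\{z_n\} \in H^1(\mathbb{Z}[1/Np, \mu_n], T)$ for squarefree $n$ coprime to a fixed bad set. The first step is to extract non-triviality of $z_{\mathrm{Kato}}$ from the hypothesis $L(E,1) \neq 0$: by the explicit reciprocity law recalled before the statement,
$$\mathrm{exp}^* \circ \mathrm{res}_p(z_{\mathrm{Kato}}) = \frac{L(E,1)}{\Omega_E^+} \cdot \omega_E$$
up to a non-zero rational factor, so $L(E,1) \neq 0$ forces $\mathrm{res}_p(z_{\mathrm{Kato}}) \neq 0$, and in fact forces its image in the quotient $H^1(\mathbb{Q}_p, V)/H^1_f(\mathbb{Q}_p, V)$ to be non-zero (since $\mathrm{exp}^*$ factors through this quotient).

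Next I would feed this non-vanishing into the Euler system bound. Kato's classes satisfy the Euler system norm relations with the correct unramified local behavior at primes away from $p$; forming Kolyvagin derivatives and combining with global Poitou--Tate duality and the Chebotarev density theorem at auxiliary Kolyvagin primes produces enough local conditions to bound a suitable dual Selmer group by the $\mathbb{Z}_p$-submodule generated by the singular localization of $z_{\mathrm{Kato}}$ at $p$. The non-CM hypothesis is essential precisely here: by Serre's open image theorem the image of $G_{\mathbb{Q}}$ in $\mathrm{Aut}(T)$ is open, so $T$ satisfies the big-image hypotheses (for instance, the existence of $\tau \in G_{\mathbb{Q}}$ with $T/(\tau - 1)T$ free of $\mathbb{Z}_p$-rank one, and irreducibility of the residual representation after possibly twisting) required for the Euler system argument to cut down the Selmer group effectively. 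Combining the bound with the non-triviality just established yields finiteness of the appropriate strict Selmer group, and a standard Poitou--Tate bookkeeping, using the local condition at $p$ coming from $H^1_f(\mathbb{Q}_p, V)$, then propagates this to finiteness of $\mathrm{Sel}(\mathbb{Q}, E[p^\infty])$ itself.

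The hard part does not lie in this last deduction but upstream of it. The main obstacles will be the construction of the Euler system of Beilinson--Kato classes and the verification of the norm relations (this uses the geometry of modular curves, Siegel units, and a careful passage through the $K$-theory of modular curves), together with the proof of the explicit reciprocity law, which is Kato's single deepest computation and comes from the syntomic/$p$-adic Hodge-theoretic comparison between the $p$-adic realization of the zeta element and the complex $L$-value. Once both inputs are granted, the implication $L(E,1) \neq 0 \Rightarrow \#\mathrm{Sel}(\mathbb{Q}, E[p^\infty]) < \infty$ is a reasonably standard application of the Euler system bound as outlined above.
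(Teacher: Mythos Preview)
Your outline is correct and is precisely the content of the references the paper cites: the paper's own ``proof'' is simply a pointer to \cite[Theorem~8.1]{rubin-es-mec} and \cite[Theorem~14.2]{kato-euler-systems}, and what you have sketched is exactly the argument carried out there (explicit reciprocity law $\Rightarrow$ $z_{\mathrm{Kato}}$ is nontrivially singular at $p$ when $L(E,1)\neq 0$, then the Kolyvagin--Rubin Euler system machinery, with the non-CM hypothesis supplying Serre's big image input, bounds the Bloch--Kato Selmer group).
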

\begin{proof}
See \cite{gross-zagier-original} and \cite{kolyvagin-euler-systems} for the general case. See also \cite[Theorem A]{rubin-tate-shafarevich} for the CM case and \cite[Theorem 8.1]{rubin-es-mec} and \cite[Theorem 14.2]{kato-euler-systems} for the non-CM case.
\end{proof}

\begin{prop} \label{prop:z-kato-sel-0-finite}
Let $E$ be an elliptic curve  over $\mathbb{Q}$ and $p$ be a prime.
If the Iwasawa main conjecture inverting $p$ holds (Conjecture \ref{conj:imc-inverting-p}) and  $\mathrm{cork}_{\mathbb{Z}_p} \mathrm{Sel}(\mathbb{Q}, E[p^\infty]) = 1$, then the following statements are equivalent.
\begin{enumerate}
\item $\mathrm{Sel}_0(\mathbb{Q}, E[p^\infty])$ is finite.
\item $z_{\mathrm{Kato}}$ is non-zero.
\item $\mathrm{res}_p (z_{\mathrm{Kato}})$ is non-zero.
\end{enumerate}
\end{prop}
\begin{proof}
Write $f_0 , f_z \in \Lambda \otimes \mathbb{Q}_p$ to be the distinguished polynomials such that
\[
\xymatrix{
 \left( f_0 \right) = \mathrm{char}_{\Lambda \otimes \mathbb{Q}_p} \left( \mathrm{Sel}_0(\mathbb{Q}_{\infty}, E[p^\infty])^\vee \otimes_{\mathbb{Z}_p} \mathbb{Q}_p \right) , &
\left( f_z \right) = \mathrm{char}_{\Lambda\otimes \mathbb{Q}_p } \left( \dfrac{\mathrm{H}^1_{\mathrm{Iw}}(\mathbb{Q},V)}{ z^{\infty}_{\mathrm{Kato}} } \right) .
}
\]
as ideals of $\Lambda \otimes \mathbb{Q}_p$, respectively.
The control theorem for $p$-strict Selmer groups says that the restriction map
$$\mathrm{res} : \mathrm{Sel}_0(\mathbb{Q}, E[p^\infty]) \to \mathrm{Sel}_0(\mathbb{Q}_{\infty}, E[p^\infty])^\Gamma$$
has finite kernel and cokernel where $\Gamma = \mathrm{Gal}(\mathbb{Q}_\infty/\mathbb{Q})$.
Thus, the finiteness of $\mathrm{Sel}_0(\mathbb{Q}, E[p^\infty])$ is equivalent to the finiteness of $\mathrm{Sel}_0(\mathbb{Q}_{\infty}, E[p^\infty])^\Gamma$.
The latter is also equivalent to  $\mathbf{1}(f_0) \neq 0$  where $\mathbf{1}$ is the trivial character.
On the other hand, 
$z_{\mathrm{Kato}} \neq 0$ if and only if $\mathbf{1}(f_z) \neq 0$
since $\mathrm{H}^1_{\mathrm{Iw}}(\mathbb{Q},V) \simeq \Lambda \otimes \mathbb{Q}_p$ \cite[Theorem 12.4.(2)]{kato-euler-systems}.
Conjecture \ref{conj:imc-inverting-p} implies that $\mathbf{1}(f_0) \neq 0$ if and only if $\mathbf{1}(f_z) \neq 0$. Thus, the equivalence between (1) and (2) follows.

It suffices to check $(2) \Rightarrow (3)$ since $(3) \Rightarrow (2)$ is trivial.
We now assume $z_{\mathrm{Kato}}$ is non-zero.
Consider the exact sequence
\begin{equation} \label{eqn:restriction-map}
\xymatrix{
0 \ar[r] & \mathrm{Sel}_0(\mathbb{Q}, V) \ar[r] & \mathrm{Sel}(\mathbb{Q}, V) \ar[r]^-{\mathrm{res}_p} & E(\mathbb{Q}_p) \otimes \mathbb{Q}_p .
}
\end{equation}
By the equivalence between (1) and (2), $\mathrm{Sel}_0(\mathbb{Q}, E[p^\infty])$ is finite.
Since $E[p^\infty] \simeq V/T$, there exists a natural map $\mathrm{Sel}_0(\mathbb{Q}, V) \to \mathrm{Sel}_0(\mathbb{Q}, E[p^\infty])$
coming from the exact sequence $0 \to T \to V \to V/T \to 0$.
Since the kernel and the cokernel of the map lie in compact $\mathbb{Z}_p$-modules, the finiteness of $\mathrm{Sel}_0(\mathbb{Q}, E[p^\infty])$ is equivalent to $\mathrm{Sel}_0(\mathbb{Q}, V) = 0$.
Applying the same argument to usual Selmer groups, we also have the equivalence between $\mathrm{cork}_{\mathbb{Z}_p} \mathrm{Sel}(\mathbb{Q}, E[p^\infty]) = 1$ and $\mathrm{dim}_{\mathbb{Q}_p} \mathrm{Sel}(\mathbb{Q}, V) = 1$.
Thus, the restriction map $\mathrm{res}_p$ is an isomorphism of one-dimensional $\mathbb{Q}_p$-vector spaces.

By Kato's explicit reciprocity law (\ref{eqn:explicit-reciprocity-law}) and Theorem \ref{thm:kato}, we have $z_{\mathrm{Kato}} \in \mathrm{Sel}(\mathbb{Q}, V)$.
Since $z_{\mathrm{Kato}} \neq 0$ and $\mathrm{res}_p$ is an isomorphism,  $\mathrm{res}_p (z_{\mathrm{Kato}})$ is also non-zero.
\end{proof}
\begin{rem}
Although $(2) \Rightarrow (3)$ immediately follows from Assumption (res), we would like to use Assumption (res) minimally.
\end{rem}
The following lemma completes the proof.
\begin{lem} \label{lem:sel-0-sel}
If $\mathrm{cork}_{\mathbb{Z}_p} \mathrm{Sel}(\mathbb{Q}, E[p^\infty]) = 1$ and $\mathrm{res}_p : \mathrm{Sel}(\mathbb{Q}, V) \simeq E(\mathbb{Q}_p) \otimes \mathbb{Q}_p$, then $\mathrm{Sel}_0(\mathbb{Q}, E[p^\infty])$ is finite.
\end{lem}
\begin{proof}
As in the proof of Proposition \ref{prop:z-kato-sel-0-finite},  $\mathrm{cork}_{\mathbb{Z}_p} \mathrm{Sel}(\mathbb{Q}, E[p^\infty]) = 1$ if and only if $\mathrm{dim}_{\mathbb{Q}_p} \mathrm{Sel}(\mathbb{Q}, V) = 1$.
If $\mathrm{res}_p : \mathrm{Sel}(\mathbb{Q}, V) \simeq E(\mathbb{Q}_p) \otimes \mathbb{Q}_p$, then $\mathrm{Sel}_0(\mathbb{Q}, V) = 0$ due to  (\ref{eqn:restriction-map}). Thus, the finiteness of $\mathrm{Sel}_0(\mathbb{Q}, E[p^\infty])$ follows.
\end{proof}
Theorem \ref{thm:main} follows from the combination of Corollary \ref{cor:bertolini-darmon-venerucci}, Theorem \ref{thm:kato}, Proposition \ref{prop:z-kato-sel-0-finite}, and Lemma \ref{lem:sel-0-sel}.
The last ``In particular" part follows from the work of Gross--Zagier \cite{gross-zagier-original} and Kolyvagin \cite{kolyvagin-euler-systems} with a choice of a suitable imaginary quadratic field.
The existence of such an imaginary quadratic field is ensured by the work of Bump--Friedberg--Hoffstein \cite{bump-friedberg-hoffstein} or Murty--Murty \cite{murty-murty-mean}.

\section*{Acknowledgement}
The discussion with Ashay Burungale and Francesc Castella leads us to improve the main result and the exposition in an earlier version significantly.
We deeply thank to both.
This research was partially supported 
by a KIAS Individual Grant (SP054102) via the Center for Mathematical Challenges at Korea Institute for Advanced Study and
by the National Research Foundation of Korea(NRF) grant funded by the Korea government(MSIT) (No. 2018R1C1B6007009).
We would like to thank the referee for carefully reading our manuscript and for giving constructive and valuable comments.

\bibliographystyle{amsalpha}
\bibliography{library}

\end{document}